\documentclass[12pt]{amsart}
\usepackage{amssymb,amsmath,amsfonts}
\usepackage{enumerate, dsfont}
\hoffset=0in 
\voffset=0in
\oddsidemargin=0in
\evensidemargin=0in
\topmargin=0.0in 
\headsep=0.15in 
\headheight=0pt
\textwidth=6.5in
\textheight=8.5in


\theoremstyle{plain}
 \newtheorem{theorem}{Theorem}
 
 \newtheorem{lemma}{Lemma}
 \newtheorem{corollary}{Corollary}

\theoremstyle{definition}
 \newtheorem{example}{Example}[section]
 \newtheorem{definition}{Definition}[section]
\theoremstyle{remark}
 \numberwithin{equation}{section}

 \DeclareMathOperator{\ordi}{ord_{\infty}}

\begin{document}
\title{Prime Rational Functions over a Field}

\author{Eva Goedhart}
\address{Department of Mathematics, Franklin and Marshall College\\
	Philadelphia, PA US\newline
eva.goedhart@fandm.edu}

\author{OMAR KIHEL}
\address{Department of Mathematics and Statistics, Brock University\\
St. Catharines, L2S 3A1, Canada\\
okihel@brocku.ca}

\author{JESSE LARONE}
\address{Department of Mathematics and Statistics, Brock University\\
St. Catharines, L2S 3A1, Canada\\
jlarone@brocku.ca}

\keywords{Prime polynomial; critical value; resultant.}

\subjclass[2010]{11C08}

\begin{abstract}
The aim of this paper is to provide sufficient conditions for when a polynomial or rational function over a field $K$ is prime using its order of vanishing at infinity and the resultant.
\end{abstract}

\maketitle

\section{Introduction}
In 1922, J.F.~Ritt~\cite{Ritt} defined composite polynomials $f(x)$ over $\mathbb{C}$ as those for which there exist polynomials $g(x)$ and $h(x)$ in $\mathbb{C}[x]$ such that $f=g\circ h$ where the degrees of $g(x)$ and $h(x)$ are each at least 2. Here the prime polynomials are those for which this does not hold. Engstrom~\cite{En41} and Levi~\cite{Levi} extended Ritt's main results to polynomials over fields of characteristic zero. Though many variations of Ritt's second theorem were published, Zannier~\cite{Za93} finished the work of Ritt and Levi by extending Ritt's second theorem to a field of any characteristic. In 2000, Beardon~\cite{Be01} returned to $\mathbb{C}$ to study the role of critical points and critical values in the prime decomposition of polynomials. A few years later, Ayad~\cite{Ayad} used critical points and critical values of polynomials in $\mathbb{C}[x]$ to determine sufficient conditions for when a polynomial is prime among other results. In 2015, Kihel and Larone~\cite{KiLa} extended the definitions of prime and composite polynomials to rational functions in $\mathbb{C}(x)$. In this paper, we continue this work and extend Ayad's results to find conditions under which polynomials and rational functions are prime over a field $K$.

In Section~\ref{sec:finite}, we briefly discuss the decomposition of polynomial functions over a finite field.  In Section~\ref{sec:field}, for a field $K$ we define and consider prime and composite rational functions in $K(x)$.  We then provide sufficient conditions on the order of vanishing at infinity for $f\in K(x)$ to be prime.  In Section~\ref{sec:resultant}, using the resultant and the discriminant, we give sufficient conditions on the critical values of $f\in K(x)$ for $f$ to be prime.


\section{Functions over $\mathbb{F}_q$}\label{sec:finite}
Let $q$ be a power of a prime number, and denote by $\mathbb{F}_q$ the finite field of $q$ elements.  Any function $\varphi :\mathbb{F}_q\to\mathbb{F}_q$ is equivalent to a polynomial function.  Thus, the set of all functions over $\mathbb{F}_q$ can be described exactly as  the ring $$A=\mathbb{F}_q[x]/(x^q-x)$$ under addition and composition of functions. The units of $A$ with respect to function composition are called the {\it permutation polynomials} of $\mathbb{F}_q$. These are the polynomials that are bijections of $\mathbb{F}_q$ into itself. There are exactly $q!$ such polynomials and they can be computed readily, for example, using Lagrange's interpolation formula.  Explicit classes of permutation polynomials are quite rare, with certain exceptions such as linear polynomials, and as such they form an active area of research.

\begin{theorem}\label{thm:units}
Let $\varphi=\alpha\circ\beta$, where $\varphi,\alpha,\beta\in A$.  Then $\varphi$ is a permutation polynomial if and only if both $\alpha$ and $\beta$ are permutation polynomials.
\end{theorem}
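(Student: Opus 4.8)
The plan is to reduce the statement entirely to elementary properties of maps on the finite set $\mathbb{F}_q$, using the observation recorded above that the permutation polynomials are precisely the bijections of $\mathbb{F}_q$ onto itself. The single fact I would lean on is the pigeonhole principle in its standard form: a self-map of a finite set is injective if and only if it is surjective if and only if it is bijective. With this in hand, the ``if'' direction is immediate. If $\alpha$ and $\beta$ are permutation polynomials, then each is a bijection of $\mathbb{F}_q$, and a composition of bijections is again a bijection; hence $\varphi=\alpha\circ\beta$ is a bijection and therefore a permutation polynomial.

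For the ``only if'' direction, I would assume that $\varphi=\alpha\circ\beta$ is a permutation polynomial, so that $\varphi$ is a bijection of $\mathbb{F}_q$, and then extract the two one-sided properties separately. To see that $\beta$ is injective, suppose $\beta(a)=\beta(b)$; applying $\alpha$ gives $\varphi(a)=\varphi(b)$, and injectivity of $\varphi$ forces $a=b$. To see that $\alpha$ is surjective, take any $y\in\mathbb{F}_q$; surjectivity of $\varphi$ produces an $x$ with $\alpha(\beta(x))=y$, which exhibits $y$ in the image of $\alpha$. At this stage I have only that $\beta$ is injective and $\alpha$ is surjective, and the finite-set fact promotes each to a full bijection, so both $\alpha$ and $\beta$ are permutation polynomials.

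The argument has essentially no obstacle, and the only point that deserves explicit attention is exactly the promotion from one-sided to two-sided invertibility: the direct estimates give injectivity of $\beta$ and surjectivity of $\alpha$, and it is the finiteness of $\mathbb{F}_q$ (equivalently, the fact that we are working inside $A=\mathbb{F}_q[x]/(x^q-x)$, where every map $\mathbb{F}_q\to\mathbb{F}_q$ is realized by a polynomial) that lets me conclude each is bijective. I would state this pigeonhole step explicitly so that the reduction is transparent and so that it is clear the characteristic and the specific polynomial representation play no role beyond guaranteeing that $A$ consists of all functions on a finite set.
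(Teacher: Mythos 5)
Your proof is correct, and it takes a mildly different route from the paper's. For the nontrivial direction, the paper argues only about $\beta$: from the cardinality chain $\vert\mathbb{F}_q\vert=\vert\varphi(\mathbb{F}_q)\vert=\vert\alpha(\beta(\mathbb{F}_q))\vert\leq\vert\beta(\mathbb{F}_q)\vert\leq\vert\mathbb{F}_q\vert$ it concludes $\beta(\mathbb{F}_q)=\mathbb{F}_q$, hence that $\beta$ is a permutation polynomial, and then it disposes of $\alpha$ in one stroke by writing $\alpha=\varphi\circ\beta^{-1}$ and citing the already-proved ``if'' direction. You instead treat the two factors symmetrically and directly: injectivity of $\varphi$ gives injectivity of $\beta$, surjectivity of $\varphi$ gives surjectivity of $\alpha$, and the pigeonhole principle on the finite set $\mathbb{F}_q$ promotes each one-sided property to a bijection. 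The trade-off: your version makes completely explicit where finiteness enters (only in the promotion step; the two one-sided implications hold for self-maps of an arbitrary set), whereas the paper's version is slightly more economical, since the composition trick $\alpha=\varphi\circ\beta^{-1}$ reuses the trivial direction and avoids any separate argument for $\alpha$. Both arguments are complete and correct.
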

\begin{proof}
First, note that if both $\alpha$ and $\beta$ are permutation polynomials, then so is $\varphi$.  Next, assume that $\varphi$ is a permutation polynomial.  Then $$\vert\mathbb{F}_q\vert=\vert \varphi(\mathbb{F}_q)\vert =\vert \alpha\big( \beta(\mathbb{F}_q)\big)\vert\leq\vert \beta(\mathbb{F}_q)\vert\leq\vert\mathbb{F}_q\vert$$ from which we obtain $\vert \beta(\mathbb{F}_q)\vert=\vert\mathbb{F}_q\vert$.  Since $\beta(\mathbb{F}_q)\subseteq\mathbb{F}_q$, it follows that $\beta(\mathbb{F}_q)=\mathbb{F}_q$, so $\beta$ is a permutation polynomial.  It immediately follows that $\alpha=\varphi\circ \beta^{-1}$ is also a permutation polynomial by the first part. 
\end{proof}

We adopt the standard nomenclature of rings to describe the functions that compose to the zero function in $A$. 

\begin{definition}
Let $\varphi\in A$.  We say that $\varphi$ is a {\it zero divisor} if there exists $\psi\in A\setminus\{0\}$ such that $\psi\circ\varphi=0$.
\end{definition}

\begin{theorem}
Every $\varphi\in  A\setminus\{0\}$ is either a permutation polynomial or a zero divisor.
\end{theorem}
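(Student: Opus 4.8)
The plan is to establish the stated dichotomy by showing that any $\varphi \in A \setminus \{0\}$ that fails to be a permutation polynomial must be a zero divisor. I would begin from the characterization already used in the proof of Theorem~\ref{thm:units}: a function on the finite set $\mathbb{F}_q$ is a bijection if and only if it is surjective. Hence, if $\varphi$ is not a permutation polynomial, its image $S = \varphi(\mathbb{F}_q)$ is a proper subset of $\mathbb{F}_q$, so there exists $a \in \mathbb{F}_q \setminus S$.

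The key step is to produce a nonzero $\psi \in A$ with $\psi \circ \varphi = 0$. I would take the explicit polynomial $\psi(y) = \prod_{s \in S}(y - s)$, whose roots are exactly the elements of the image $S$. For every $x \in \mathbb{F}_q$ we have $\varphi(x) \in S$, so $\psi(\varphi(x)) = 0$; thus $\psi \circ \varphi$ is the zero function in $A$.

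It remains to confirm that $\psi$ is nonzero as an element of $A$, that is, nonzero as a function on $\mathbb{F}_q$. This is where one must be slightly careful, since elements of $A$ are only determined modulo $x^q - x$, and being nonzero in $A$ means not vanishing identically on $\mathbb{F}_q$. Evaluating at the omitted point gives $\psi(a) = \prod_{s \in S}(a - s) \neq 0$, because $a$ differs from every $s \in S$; equivalently, $\psi$ has degree $|S| < q$ and so cannot reduce to the zero function. Therefore $\psi \neq 0$ and $\psi \circ \varphi = 0$, exhibiting $\varphi$ as a zero divisor.

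I expect the only genuine subtlety to be this last verification, namely distinguishing the zero polynomial from the zero element of $A$; once the image is seen to be a proper subset, the explicit vanishing polynomial makes the construction transparent. If desired, one may also note that the two alternatives are mutually exclusive: a permutation polynomial $\varphi$ admits a compositional inverse, so $\psi \circ \varphi = 0$ would force $\psi = \psi \circ \varphi \circ \varphi^{-1} = 0$, ruling out zero divisors among the units.
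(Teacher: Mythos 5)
Your proof is correct, but it takes a genuinely different route from the paper's. The paper argues abstractly: it considers the composition operator $C_{\varphi}\colon A\to A$, $\psi\mapsto\psi\circ\varphi$, and splits on whether $C_{\varphi}$ is injective. If it is, finiteness of $A$ makes it surjective, producing a left compositional inverse of $\varphi$, whence $\varphi$ is a permutation polynomial by Theorem~\ref{thm:units}; if it is not, the difference $\psi=\alpha-\beta$ of two functions with $\alpha\circ\varphi=\beta\circ\varphi$ is a nonzero annihilator. Your argument instead starts from the contrapositive (non-bijective implies non-surjective on a finite set) and explicitly constructs the annihilator as the vanishing polynomial $\psi(y)=\prod_{s\in S}(y-s)$ of the image $S=\varphi(\mathbb{F}_q)$, verifying $\psi\neq 0$ in $A$ by evaluating at a point outside $S$. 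What each approach buys: the paper's pigeonhole argument is shorter and reuses Theorem~\ref{thm:units}, and it is the standard ``finite ring: unit or zero divisor'' template adapted to composition; yours is constructive, exhibiting a concrete zero divisor of degree $\vert S\vert<q$ without needing Theorem~\ref{thm:units} at all for the dichotomy, and your closing remark that the two alternatives are mutually exclusive is a small addition the paper does not state. Both proofs are complete; your care in distinguishing the zero polynomial from the zero element of $A$ is exactly the right point to flag, since nonvanishing at a single point of $\mathbb{F}_q$ (or the degree bound $\vert S\vert<q$) settles it.
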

\begin{proof}
For  $\varphi\in  A\setminus\{0\}$, define the map $C_{\varphi}:A\to A$ by $\psi\mapsto\psi\circ\varphi$.  If $C_{\varphi}$ is injective, then it is surjective as well, since $A$ is finite.  Thus, there exists $\psi\in A$ such that $(\psi\circ\varphi)=\mathds{1}$ where $\mathds{1}$ denotes the identity function of $A$. Therefore by Theorem~\ref{thm:units}, $\varphi$ is a permutation polynomial. If $C_{\varphi}$ is not injective, there exist distinct $\alpha,\beta\in A$ such that $\alpha\circ\varphi=\beta\circ\varphi$.  Setting $\psi=\alpha-\beta$ yields $\psi\circ\varphi=(\alpha\circ\varphi)-(\beta\circ\varphi)=0$, showing that $\varphi$ is a zero divisor.
\end{proof}

As a consequence of the preceding result, we claim that there is no interesting notion of irreducibility for the elements of the ring $A$.  The permutation polynomials are units, so they are considered trivial cases which leaves us with the zero divisors of $A$.  Letting $\varphi\in A\setminus\{0\}$ be a zero divisor, there exists $\psi\in A\setminus\{0\}$ such that $\psi\circ\varphi=0$.  Then for $\mathds{1}\in A$, the identity function, $(\psi+\mathds{1})\circ\varphi=\varphi$ is a non-trivial decomposition of $\varphi$ since $\psi+\mathds{1}\neq \mathds{1}$.  In this sense, every function over $\mathbb{F}_q$ is either a permutation polynomial or is a composition of polynomial functions, and so is composite.


\section{Decompositions and the order at infinity}\label{sec:field}
As described in the introduction, there are results regarding the decomposition of polynomials and rational functions into indecomposables, which have been referred to as prime polynomials and prime rational functions.  Ayad \cite{Ayad} studied this problem using valencies for polynomials over $\mathbb{C}$, and Kihel and Larone \cite{KiLa} later considered some rational functions over $\mathbb{C}$.  In this section, we further discuss the topic and improve upon a result of Ayad.  

Throughout, let $K$ be a field and $\overline{K}$ an algebraic closure of $K$.  For any rational function $f\in K(x)\setminus\{0\}$, write $f=f_1/f_2$ with $f_1,f_2\in K[x]$ such that $\gcd (f_1,f_2)=1$, and define as is usual $\deg f=\max\{\deg f_1,\deg f_2\}$.  For convenience, we include the following result regarding the degree of a composition.
\begin{theorem}[\cite{KiLa}, Proposition 2.3]\label{thm:degree} 
Let $g,h\in K(x)$.  Then $\deg (g\circ h)=(\deg g)\cdot (\deg h)$.
\end{theorem}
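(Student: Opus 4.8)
The plan is to reduce the statement to an explicit degree count after homogenizing the outer function. Write $g=g_1/g_2$ and $h=h_1/h_2$ in lowest terms, set $m=\deg g$ and $n=\deg h$, and assume $g$ and $h$ are nonconstant (the cases where either is constant give degree $0$ on both sides and are immediate). First I would attach to $g$ the two binary forms $G_1(X,Y)=Y^m g_1(X/Y)$ and $G_2(X,Y)=Y^m g_2(X/Y)$, each homogeneous of degree $m$ because $\deg g_1,\deg g_2\le m$. Substituting $X=h_1$, $Y=h_2$ and using homogeneity, the powers of $h_2$ cancel and one obtains the clean identity $g\circ h = N/D$ with $N=G_1(h_1,h_2)$ and $D=G_2(h_1,h_2)$ in $K[x]$. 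Each monomial of $N$ and $D$ has degree at most $mn$, so $\deg N,\deg D\le mn$; the work is to show the fraction $N/D$ is reduced and that the larger of the two degrees is exactly $mn$.

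For reducedness I would argue by common roots in $\overline{K}$. If an irreducible $\pi\in K[x]$ divided both $N$ and $D$, pick a root $\theta\in\overline{K}$ of $\pi$; then $(h_1(\theta),h_2(\theta))$ is a common zero of the forms $G_1,G_2$. Since $\gcd(h_1,h_2)=1$, the pair $(h_1(\theta),h_2(\theta))$ is not $(0,0)$, so it is a genuine projective common zero of $G_1$ and $G_2$. But $\gcd(g_1,g_2)=1$ together with $\max\{\deg g_1,\deg g_2\}=m$ forces $G_1$ and $G_2$ to have no common zero in $\mathbb{P}^1(\overline{K})$: an affine common zero would contradict $\gcd(g_1,g_2)=1$, and a common zero at infinity would force both $\deg g_1,\deg g_2<m$. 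This contradiction gives $\gcd(N,D)=1$, so $\deg(g\circ h)=\max\{\deg N,\deg D\}$.

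It remains to compute that maximum, and here I would split into cases according to $n_1=\deg h_1$ and $n_2=\deg h_2$. When $n_1\ne n_2$, the monomials $h_1^i h_2^{m-i}$ appearing in $N$ (resp.\ $D$) have pairwise distinct degrees, so no cancellation occurs and the degree of $N$ (resp.\ $D$) is read off from the extreme surviving term. Depending on whether $h_1$ or $h_2$ dominates, that surviving term is governed either by the fact that one of $\deg g_1,\deg g_2$ equals $m$, or by the fact that $g_1$ and $g_2$ do not both vanish at $0$; in either situation at least one of $\deg N,\deg D$ attains $mn$. When $n_1=n_2=n$, every monomial contributes degree $mn$, and the coefficient of $x^{mn}$ in $N$ is exactly $G_1(p,q)$, where $p,q$ are the leading coefficients of $h_1,h_2$; likewise for $D$ with $G_2(p,q)$. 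Both can vanish only if $p/q$ is a common root of $g_1$ and $g_2$, again impossible. Hence $\max\{\deg N,\deg D\}=mn$ in every case, yielding $\deg(g\circ h)=mn=(\deg g)(\deg h)$.

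The main obstacle is the last case $n_1=n_2$: unlike the distinct-degree situation, the top-degree terms genuinely can cancel, so a bare degree bound is not enough. The point I would stress is that the same coprimality hypothesis $\gcd(g_1,g_2)=1$ that guarantees $N/D$ is reduced also prevents the leading terms of $N$ and $D$ from cancelling simultaneously, since simultaneous cancellation is precisely a common projective zero of $G_1$ and $G_2$. Thus a single coprimality principle controls both the reducedness argument and the degree count.
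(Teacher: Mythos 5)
Your proof is correct, but there is no in-paper argument to compare it against: the paper does not prove Theorem~\ref{thm:degree}, it imports the statement verbatim from \cite{KiLa} (Proposition 2.3), including it ``for convenience.'' So the only meaningful review is of your argument on its own terms, and it holds up. The homogenization $G_i(X,Y)=Y^m g_i(X/Y)$ and the identity $g\circ h=G_1(h_1,h_2)/G_2(h_1,h_2)$ are exactly right, and your central observation is sound: $\gcd(g_1,g_2)=1$ together with $\max\{\deg g_1,\deg g_2\}=m$ forces $G_1$ and $G_2$ to have no common zero in $\mathbb{P}^1(\overline{K})$, and this single fact delivers both the reducedness of the fraction (since $\gcd(h_1,h_2)=1$ guarantees $\bigl(h_1(\theta),h_2(\theta)\bigr)\neq(0,0)$ at any common root $\theta$) and, in the delicate case $\deg h_1=\deg h_2$, the fact that the top coefficients $G_1(p,q)$ and $G_2(p,q)$ cannot vanish simultaneously. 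The case analysis for $\deg h_1\neq\deg h_2$ (distinct monomial degrees, hence no cancellation, with the extreme term controlled either by $\max\{\deg g_1,\deg g_2\}=m$ or by $g_1,g_2$ not both vanishing at $0$) is also correct. The one point you leave implicit is that $G_2(h_1,h_2)$ is not the zero polynomial, i.e.\ that $g\circ h$ is well defined: this needs $h$ nonconstant, since $G_2$ factors over $\overline{K}$ into linear forms $\alpha X-\beta Y$ and $\alpha h_1-\beta h_2=0$ would force $h$ to be the constant $\beta/\alpha$. That deserves one sentence but is not a gap.

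For comparison, the route usually taken in the literature for this statement is field-theoretic: for nonconstant $f\in K(x)$ one has $\deg f=[K(x):K(f)]$, and the tower $K(x)\supseteq K(h)\supseteq K(g(h))$, together with the isomorphism of $K(h)$ with a rational function field under which $g(h)$ corresponds to $g$, gives $\deg(g\circ h)=[K(x):K(h)]\cdot[K(h):K(g(h))]=(\deg h)(\deg g)$ in two lines. That argument buys brevity and conceptual clarity at the cost of invoking the degree--index identity. Your proof is more elementary and purely computational, and it has a concrete payoff: it exhibits the numerator and denominator of $g\circ h$ in lowest terms, which is precisely the kind of explicit information the paper itself relies on later --- the proof of Lemma~\ref{lem:ordi} writes $g\circ h$ in essentially this homogenized form in order to compute its order at infinity.
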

It is straightforward to show that the rational functions of degree 1 form the group of units in $K(x)$ under the operation of composition.  This immediately justifies the following definitions of prime and composite rational functions, as well as provide a simple class of prime rational functions.
\begin{definition}
Let $L/K$ be a field extension, and let $f\in K(x)$.  If $f=g\circ h$ for some $g,h\in L(x)$ with degrees at least 2, then $f$ is called {\it composite} over $L$.  Otherwise, $f$ is said to be {\it prime} over $L$.
\end{definition}

As a direct corollary to Theorem~\ref{thm:degree}, we have the following. A similar result is indicated in \cite[Corollary 2.4]{KiLa} for complex rational functions. 
\begin{theorem}
Let $f\in K(x)$.  If $\deg f$ is a prime number, then $f$ is prime over $\overline{K}$.
\end{theorem}

We recall the definitions used by Ayad~\cite[Section 2, p.146]{Ayad} over $\mathbb{C}$ here and interpret them over $K$. 
\begin{definition}
Let $f\in K[x]$ and let $a\in\overline{K}$.  The smallest integer $i\geq 1$ such that $i$th derivative at $a$, $f^{(i)}(a)\neq 0$ is called the {\it valency} of $f$ at $a$ and is denoted by $v_f(a)$.  If $v_f(a)\geq 2$, $a$ is called a {\it critical point} of $f$.  An element $b\in\overline{K}$ is called a {\it critical value} of $f$ if there exists a critical point $a$ of $f$ such that $f(a)=b$.
\end{definition}
The above definition translates easily to $K(x)$, so we use it without further modifications.   Under the definition of the valency over $\mathbb{C}$, Ayad proved the following result.
\begin{theorem}[\cite{Ayad}, Theorem 3(1)]\label{thm:valency_C}
Let $f\in \mathbb{C}[x]$, and let $d$ be the greatest proper divisor of $\deg f$.  If $v_f(a)=p$ is prime with $p>d$ from some $a\in\mathbb{C}$, then $f$ is prime.
\end{theorem}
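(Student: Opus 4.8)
The plan is to argue by contradiction: I would assume that $f$ is composite and show that the hypothesis $v_f(a)=p>d$ cannot hold. Suppose $f=g\circ h$ with $m=\deg g\geq 2$ and $k=\deg h\geq 2$, where $g,h$ are polynomials as in Ritt's setting. By Theorem~\ref{thm:degree} we have $\deg f=mk$, so each of $m$ and $k$ is a proper divisor of $\deg f$ (for instance $m=\deg f/k<\deg f$ since $k\geq 2$). Consequently $m\leq d$ and $k\leq d$, and in particular $m<p$ and $k<p$.

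The crux of the argument is to understand how the valency behaves under composition. First I would record the geometric meaning of the valency in characteristic zero: since the Taylor coefficients $f^{(i)}(a)/i!$ are well defined over $\mathbb{C}$, the integer $v_f(a)$ is precisely the multiplicity of $a$ as a root of the polynomial $f(x)-f(a)$. Writing $b=h(a)$ and factoring $h(x)-b=(x-a)^{v_h(a)}H(x)$ with $H(a)\neq 0$, together with $g(y)-g(b)=(y-b)^{v_g(b)}G(y)$ where $G(b)\neq 0$, substitution of $y=h(x)$ gives
\[
f(x)-f(a)=g(h(x))-g(b)=(x-a)^{v_h(a)\,v_g(b)}\,H(x)^{v_g(b)}\,G(h(x)).
\]
Since $H(a)\neq 0$ and $G(h(a))=G(b)\neq 0$, the trailing factors do not vanish at $a$, so reading off the multiplicity at $x=a$ yields the multiplicativity relation
\[
v_f(a)=v_g(h(a))\cdot v_h(a).
\]

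With this formula in hand the conclusion is immediate. Because $p=v_f(a)$ is prime and both factors on the right are positive integers, the factorization $p=v_g(h(a))\cdot v_h(a)$ forces one factor to equal $p$ and the other to equal $1$. But $v_h(a)$ is the multiplicity of a single root of $h(x)-h(a)$, a polynomial of degree $k$, so $v_h(a)\leq k\leq d<p$; likewise $v_g(h(a))\leq m\leq d<p$. Hence neither factor can equal $p$, a contradiction. Therefore no such decomposition exists and $f$ is prime.

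I expect the main obstacle to be the careful verification of the composition formula $v_f(a)=v_g(h(a))\cdot v_h(a)$, specifically confirming that the auxiliary factors $H(x)^{v_g(b)}$ and $G(h(x))$ are nonzero at $a$ so that no multiplicity is lost or gained in the substitution. The reduction of valency to root multiplicity is where the characteristic-zero hypothesis is used essentially, and it is exactly this identification that makes the degree bounds $v_h(a)\leq\deg h$ and $v_g(h(a))\leq\deg g$ available to close the argument.
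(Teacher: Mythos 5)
Your proof is correct, and it takes a genuinely different route from the paper. The paper never proves this theorem where it is stated (it is quoted from Ayad); instead, after proving Theorem~\ref{th3}, it observes that the present statement follows as a special case: composing $f-f(a)$ with the unit $\mu(x)=(ax+1)/x$ converts the valency hypothesis $v_f(a)=p$ into a hypothesis on the order at infinity, and Theorem~\ref{th3} --- whose engine is Lemma~\ref{lem:ordi}, the multiplicativity $-\ordi f=\ordi g\cdot\ordi h$ for decompositions normalized so that $\ordi h<0$ --- then yields primality. Your argument instead stays at the finite point $a$ and uses the local chain rule $v_f(a)=v_g\big(h(a)\big)\cdot v_h(a)$, proved by tracking root multiplicities through the composition; the contradiction then comes from the same kind of degree bounds ($v_h(a)\le\deg h\le d<p$, $v_g(h(a))\le\deg g\le d<p$) that the paper applies to $\vert\ordi g\vert$ and $\vert\ordi h\vert$. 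The two multiplicativity statements are really M\"obius-conjugate versions of one another --- the paper's unit trick is exactly what transports your identity at $a$ to the point at infinity --- but your route is more elementary and self-contained, requiring no rational-function machinery, and is closer in spirit to Ayad's original valency argument. What the paper's detour buys is the stronger Theorem~\ref{th3}: it applies to rational $f$, needs only $p\mid\ordi f$ rather than an exact prime valency, and excludes decompositions with $g,h\in\overline{K}(x)$, not just polynomial ones. That last point is the one caveat for your write-up: under the definition of composite in Section~\ref{sec:field}, the factors $g,h$ may be rational functions, and your argument only rules out polynomial decompositions. This is harmless for the statement as attributed to Ayad (Ritt's polynomial setting, as in the Introduction), and the gap closes via the classical fact that a polynomial admitting a rational decomposition $g\circ h$ admits a polynomial one (since $f^{-1}(\infty)=\{\infty\}$, the set $g^{-1}(\infty)$ is a single point $c$ with $h^{-1}(c)=\{\infty\}$, so inserting a unit $\mu$ with $\mu(c)=\infty$ makes both $g\circ\mu^{-1}$ and $\mu\circ h$ polynomials), but it deserves an explicit sentence.
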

This theorem was proved using a relationship between the valencies of $f$ and the degree of its derivative.  For completeness, we include a general form of this same relationship over $K$. This will help to clarify some of statements that follow it.
\begin{lemma}\label{lem3}
Let $f\in K[x]$.  Then $$(\deg f)-1\geq \sum_{a\in\overline{K}}\big(v_f(a)-1\big)$$ where equality holds when $\deg f\neq 0$.
\end{lemma}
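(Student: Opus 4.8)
The plan is to translate the valency condition at each point into a statement about the order of vanishing of the derivative $f'$, and then to sum over all of $\overline{K}$, exploiting that $f'$ splits into linear factors over the algebraically closed field $\overline{K}$.

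First I would fix $a\in\overline{K}$ with $v_f(a)=m$ and analyze the local behaviour. By definition $f'(a)=\cdots=f^{(m-1)}(a)=0$ while $f^{(m)}(a)\neq 0$. Setting $g=f-f(a)$, the identity $g^{(i)}(a)=i!\,c_i$, where $c_i$ is the coefficient of $(x-a)^i$ in the expansion of $g$ about $a$, yields two facts at once: the condition $f^{(m)}(a)=g^{(m)}(a)\neq 0$ forces $m!\neq 0$ in $K$, so that $m<\mathrm{char}\,K$ when the characteristic is positive; and consequently $c_0=\cdots=c_{m-1}=0$ while $c_m\neq 0$, so $a$ is a zero of $g$ of order exactly $m=v_f(a)$. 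Writing $g=(x-a)^m u$ with $u(a)\neq 0$ and differentiating gives $f'=g'=(x-a)^{m-1}\big(mu+(x-a)u'\big)$, whose bracketed factor equals $m\,u(a)$ at $x=a$; this is nonzero since $u(a)\neq 0$ and $m\neq 0$ in $K$. Hence I obtain the clean local identity $\ord_a(f')=v_f(a)-1$ at every point where the valency is defined.

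Summing over $a\in\overline{K}$, only the critical points contribute to the right-hand side of the asserted inequality, and each contribution equals $\ord_a(f')$. Since $\overline{K}$ is algebraically closed, $f'$ factors completely, so $\sum_{a\in\overline{K}}\ord_a(f')=\deg f'$ whenever $f'\neq 0$. Combining this with the elementary bound $\deg f'\leq(\deg f)-1$ gives
$$\sum_{a\in\overline{K}}\big(v_f(a)-1\big)\;\leq\;\deg f'\;\leq\;(\deg f)-1,$$
which is the desired inequality. For the equality clause I would restrict to the case in which $f'$ has degree exactly $(\deg f)-1$; in characteristic zero this holds for every $f$ with $\deg f\neq 0$, since differentiation lowers the degree by precisely one. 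In that case both displayed inequalities become equalities, and the total vanishing of $f'$ is accounted for entirely by the critical points, giving $\sum_a\big(v_f(a)-1\big)=(\deg f)-1$.

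The step I expect to require the most care is positive characteristic. The degree bound $\deg f'\leq(\deg f)-1$ can be strict, for instance when $p\mid\deg f$; and, more seriously, at a point where $f'$ vanishes to order at least $p$ every derivative $f^{(i)}(a)$ may vanish, so that $v_f(a)$ is undefined and the local identity no longer applies. These inseparable phenomena are precisely what can break equality, which is why the equality assertion is reliable in the characteristic-zero setting of Ayad's original statement, while the inequality persists in general once the sum is understood to range over the points at which the valency is defined.
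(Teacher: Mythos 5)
Your proposal is correct, and at its core it takes the same route as the paper's proof: identify $v_f(a)-1$ with the multiplicity of $a$ as a root of $f'$, sum over $\overline{K}$ using that $f'$ splits completely there, and compare with $\deg f'\leq (\deg f)-1$. The difference is one of care, and it matters in two places. First, you actually prove the local identity $\ord_a(f')=v_f(a)-1$ (via the Taylor coefficients, noting that $f^{(m)}(a)=m!\,c_m\neq 0$ forces $m!\neq 0$ in $K$, hence $m<\operatorname{char}K$ in positive characteristic), whereas the paper simply asserts that $v_f(a)-1$ ``counts the multiplicity of $a$ as a root of $f'$.''

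Second, you prove the equality clause only in characteristic zero, and here your caution exposes a genuine defect in the lemma and in the paper's own proof rather than a gap in your argument. The paper justifies equality by claiming that $\deg f\neq 0$ implies $\operatorname{char}(K)=0$ or $\gcd(\deg f,\operatorname{char}(K))=1$, which is false as stated (take $f=x^p$ over $\mathbb{F}_p$), and the equality itself genuinely fails in positive characteristic: for $f=x^p$ every derivative $f^{(i)}$, $i\geq 1$, vanishes identically, so no point has a defined valency, the sum is $0$, yet $(\deg f)-1=p-1>0$. Even adding the hypothesis $\gcd(\deg f,\operatorname{char}K)=1$ does not rescue the printed argument: for $f=x^{p+2}$ over $\mathbb{F}_p$ with $p$ odd, one has $f'=2x^{p+1}$, $f''=2x^p$, and $f^{(i)}=0$ identically for $i\geq 3$, so $v_f(0)$ is undefined, the sum is again $0$, while $(\deg f)-1=p+1$; the paper's final step silently assumes the multiplicity of every root of $f'$ equals $v_f(a)-1$, which breaks precisely at such inseparable points. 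What you prove --- the inequality in general (with the sum over points where the valency is defined), and equality in characteristic zero, or more generally whenever $\deg f'=(\deg f)-1$ and the valency is defined at every root of $f'$ --- is the correct form of the statement.
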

\begin{proof}
Note that the only values of $v_f(a)-1$ which contribute to the sum are those for which $v_f(a)\geq 2$.  In particular, $v_f(a)-1$ counts the multiplicity of $a$ as a root of the first derivative $f^{\prime}$, so the sum cannot exceed the degree of $f^{\prime}$ which is itself less than or equal to $(\deg f)-1$.

Let $f\in K[x]$ such that $\deg f\neq 0$, then either $\text{char}(K)=0$ or $\gcd(\deg f,\text{char}(K))=1$.  In either case, $\deg f^{\prime}=(\deg f)-1$, so the sum of the multiplicities of the roots of $f^{\prime}$ is exactly $(\deg f)-1$, and the sum of $v_f(a)-1$ over all $a\in\overline{K}$ must equal $\deg f^{\prime}=(\deg f)-1$.
\end{proof}

We can refine Theorem \ref{thm:valency_C} by finding a replacement for the valencies.  Since Ayad's definition is in fact related to the order of vanishing at a point, we simply look to that concept instead.  That $v_f(a)-1$ counts the multiplicity of $a$ as a root of $f^{\prime}$ makes it apparent that Lemma \ref{lem3} relates to the sum of the orders of zeros and poles of a rational function over all points in $\mathbb{P}^1$ being 0.

Recalling that for $f\in K(x)\setminus\{0\}$ written as a ratio of polynomials $f=f_1/f_2$, we have $\ordi: K(x)\setminus\{0\}\to\mathbb{Z}$ with $\ordi f=\deg f_2-\deg f_1$. Note that $\vert \ordi f\vert\leq \deg f$ and that $\ordi f$ remains unchanged when simplifying the ratio $f_1/f_2$.  Additionally, for a composite function $f\in K(x)$, we can compose with the identity to obtain a property for $\ordi f$ which is similar to that of Theorem \ref{thm:degree}, which states that the map $\deg: \big(K(x)\setminus\{0\},\circ\big)\to\big(\mathbb{Z},\cdot\big)$ preserves operations.  We first illustrate this with an example.
\begin{example}\label{ex:ordi}
Consider the elements $f,g,h\in \mathbb{C}(x)$ given as $$f(x)=\frac{(x^4+1)^3(x^4+x^2+2)}{(x^2+1)^4},\qquad g(x)=\frac{x+1}{x^4},\qquad \text{and } h(x)=\frac{x^2+1}{x^4+1}.$$  One can readily verify that $f=g\circ h$, yet $-\ordi f=8\neq 3\cdot 2=\ordi g\cdot \ordi h$.  Now, consider a unit function $\mu(x)=1/x$ in $\mathbb{C}(x)$ with $\mu^{-1}=\mu$ and define $G=g\circ\mu^{-1}$ and $H=\mu\circ h$ so that $$G(x)=x^4+x^3\qquad\text{and}\qquad H(x)=\frac{x^4+1}{x^2+1}$$  yield a composition $f=G\circ H$. However, for this new pair of functions we note that $\ordi G\cdot\ordi H=(-4)(-2)=8=-\ordi f$.
\end{example}
The following result details the behaviour observed for the composition $f=G\circ H$ in the example above.
\begin{lemma}\label{lem:ordi}
Let $f\in K(x)$ and $L/K$ be an extension field.  If $f=g\circ h$ for some $g,h\in L(x)$ with $\ordi h<0$, then $-\ordi f=\ordi g\cdot \ordi h$.
\end{lemma}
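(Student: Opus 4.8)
The plan is to compute $\ordi f$ directly from an explicit (though not necessarily reduced) representation of $g\circ h$ as a ratio of polynomials. The essential tool is the observation already recorded above that $\ordi$ is unchanged when the numerator and denominator are multiplied by a common nonzero polynomial; this frees me from having to reduce the composite to lowest terms. First I would write $g=g_1/g_2$ and $h=h_1/h_2$ with $\gcd(g_1,g_2)=\gcd(h_1,h_2)=1$, set $d_1=\deg g_1$ and $d_2=\deg g_2$ so that $\ordi g=d_2-d_1$, and substitute $h$ into $g$.

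Next, writing $g_1(y)=\sum_{i=0}^{d_1}a_i y^i$ and clearing denominators, I would obtain $g_1(h(x))=H_1(x)/h_2(x)^{d_1}$ with $H_1=\sum_{i=0}^{d_1}a_i h_1^i h_2^{d_1-i}$, and similarly $g_2(h(x))=H_2(x)/h_2(x)^{d_2}$ with $H_2=\sum_{j=0}^{d_2}b_j h_1^j h_2^{d_2-j}$. Combining these gives the representation $f=\dfrac{H_1\,h_2^{d_2}}{H_2\,h_2^{d_1}}$, from which the invariance of $\ordi$ yields $\ordi f=(\deg H_2-\deg H_1)+(d_1-d_2)\deg h_2$. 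Everything then reduces to determining $\deg H_1$ and $\deg H_2$.

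This last computation is where the hypothesis $\ordi h<0$ does the real work, and it is the step I expect to be the crux. The condition $\ordi h<0$ says precisely that $\deg h_1>\deg h_2$, so the $i$-th summand of $H_1$ has degree $i\deg h_1+(d_1-i)\deg h_2=d_1\deg h_2+i(\deg h_1-\deg h_2)$, which is \emph{strictly increasing} in $i$. Hence the top summand $a_{d_1}h_1^{d_1}$ has strictly the largest degree and nonzero leading coefficient, so no cancellation can occur and $\deg H_1=d_1\deg h_1$; the identical argument gives $\deg H_2=d_2\deg h_1$. It is exactly the failure of this monotonicity when $\ordi h>0$ that makes the naive product formula break down, as the first decomposition in Example~\ref{ex:ordi} shows. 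Substituting these two degrees into the expression above and simplifying yields $\ordi f=(d_2-d_1)(\deg h_1-\deg h_2)=\ordi g\cdot(-\ordi h)$, which is the desired identity $-\ordi f=\ordi g\cdot\ordi h$.
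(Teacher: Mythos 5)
Your proof is correct, and it shares the same skeleton as the paper's: write $f=g\circ h$ explicitly as a (not necessarily reduced) ratio of polynomials, invoke the invariance of $\ordi$ under common factors of numerator and denominator, and use $\deg h_1>\deg h_2$ to read off the relevant degrees. The genuine difference lies in how the crucial degree computation is justified. The paper factors $g$ into linear factors over the algebraic closure, writing $g(x)=b\prod_i(x-\alpha_i)\big/\prod_j(x-\beta_j)$, so that the numerator and denominator of the composite become products of polynomials $h_1-\alpha_i h_2$ and $h_1-\beta_j h_2$ (times powers of $h_2$); since $\deg h_1>\deg h_2$, each such factor visibly has degree $\deg h_1$, degrees add over a product, and no cancellation question ever arises. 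You instead keep $g_1,g_2\in L[x]$ and expand in coefficients, turning the numerator into the sum $\sum_i a_i h_1^i h_2^{d_1-i}$; there cancellation is a real possibility, and your strict-monotonicity observation (the summand degrees $d_1\deg h_2+i(\deg h_1-\deg h_2)$ strictly increase in $i$) is exactly what rules it out --- it also gives you $H_1,H_2\neq 0$ for free, so the representation is legitimate. Your route is slightly more elementary, never leaving $L$ and requiring no algebraic closure, at the cost of making the dominant-term argument explicit; the paper's factorization buys complete transparency of the degree count at the cost of passing to $\overline{K}$. Both arguments then conclude with the same bookkeeping and the same identity $-\ordi f=\ordi g\cdot\ordi h$.
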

\begin{proof}
Writing $h=h_1/h_2$ and $$g(x)=b\frac{\prod_{i=1}^{m_1}(x-\alpha_i)}{\prod_{j=1}^{m_2}(x-\beta_j)}$$ with all $\alpha_i,\beta_j\in\overline{K}$ yields $$f(x)=g\big(h(x)\big)=\frac{bh_2(x)^{\deg g-m_1}\prod_{i=1}^{m_1}\big( h_1(x)-\alpha_ih_2(x)\big)}{h_2(x)^{\deg g-m_2}\prod_{j=1}^{m_2}\big(h_1(x)-\beta_jh_2(x)\big)}.$$  Since $\ordi h=\deg h_2-\deg h_1<0$ by assumption, we have
\begin{eqnarray*}
\ordi f&=&\big((\deg g-m_2)\deg h_2+m_2\deg h_1\big)-\big((\deg g-m_1)\deg h_2+m_1\deg h_1\big)\\
&=&(m_1-m_2)(\deg h_2-\deg h_1)\\
&=&-\ordi g\cdot \ordi h
\end{eqnarray*}
as claimed.
\end{proof}
It is beneficial then to have decompositions $f=g\circ h$ for which $\ordi h<0$.  To construct one such decomposition when $\ordi h\geq 0$, write $h=h_1/h_2$ so that dividing $h_1$ by $h_2$ in $K[x]$ yields $a\in K$ and $r\in K[x]$ with $h_1=ah_2+r$, and either $r=0$ or $\deg r<\deg h_2$.  Then $h=h_1/h_2=a+r/h_2$, and we define $\mu(x)=1/(x-a)$.  This yields $f=(g\circ \mu^{-1})\circ (\mu\circ h)$ where $H=\mu\circ h$ satisfies $\ordi H=\ordi (h_2/r)=\deg r-\deg h_2<0$. In Example~\ref{ex:ordi}, notice that for $h_1(x)=x^2+1$ and $h_2(x)=x^4+1$, we have the trivial equation of $h_1(x)=0\cdot h_2(x)+ h_1(x)$ so that $a=0$ and $r=h_1$. 

For a polynomial $f\in K[x]$, $-\ordi f$ and $\deg f$ are equal.  Since $\deg$ retains some of its properties over $K[x]$ when extended to $K(x)$, such as the one given in Theorem \ref{thm:degree}, it is natural to expect $\ordi$ to retain some properties as well.  One such interesting property related to the derivative is noted below.
\begin{theorem}\label{th5}
Let $f\in K(x)$.  If $\ordi f$ is non-zero, then $-\ordi f^{\prime}=(-\ordi f)-1$.
\end{theorem}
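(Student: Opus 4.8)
The plan is to work directly with a reduced representation $f = f_1/f_2$, where $f_1,f_2\in K[x]$ with $\gcd(f_1,f_2)=1$, and set $n=\deg f_1$ and $m=\deg f_2$, so that $-\ordi f = n-m$. The hypothesis $\ordi f\neq 0$ is precisely the statement that $n\neq m$. Differentiating gives
$$f'=\frac{f_1'f_2-f_1f_2'}{f_2^{2}},$$
and since $\ordi$ is unchanged when passing to or from a reduced representation, I can read $\ordi f'$ off of this (possibly unreduced) ratio directly: $-\ordi f' = \deg(f_1'f_2-f_1f_2') - 2m$. Thus the whole theorem reduces to the single claim that $\deg(f_1'f_2-f_1f_2') = n+m-1$, for then $-\ordi f' = (n+m-1)-2m = (n-m)-1 = (-\ordi f)-1$, as desired.

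To establish this degree claim I would expand in coordinates. Writing $f_1=\sum_i a_i x^i$ and $f_2=\sum_j b_j x^j$ with $a_n,b_m\neq 0$, a direct expansion gives
$$f_1'f_2-f_1f_2'=\sum_{i,j}(i-j)\,a_ib_j\,x^{i+j-1}.$$
Every exponent here satisfies $i+j-1\leq n+m-1$, so $n+m-1$ is the only exponent that can carry the top degree, and its coefficient comes solely from the term $i=n$, $j=m$, namely $(n-m)\,a_nb_m$. The key point is that this coefficient is nonzero: $a_nb_m\neq 0$ by the choice of leading terms, and $n-m\neq 0$ as an integer since $\ordi f\neq 0$.

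The main obstacle is exactly this last nonvanishing, which is where the characteristic of $K$ enters. The coefficient $(n-m)\,a_nb_m$ is nonzero \emph{in $K$} precisely when $n-m$ is nonzero in $K$, i.e.\ when $\operatorname{char}K\nmid (n-m)=-\ordi f$. In characteristic zero this is automatic; in positive characteristic the leading term can cancel (for instance $f=x^2+x$ over $\mathbb{F}_2$ has $-\ordi f=2$ but $f'=1$, so $-\ordi f'=0\neq 1$), so the natural hypothesis is $\operatorname{char}K\nmid \ordi f$, in parallel with the coprimality condition already appearing in Lemma~\ref{lem3}. Under this condition the degree claim, and hence the theorem, follows immediately from the displayed coefficient computation. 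An alternative route, should one prefer to avoid the coordinate expansion, is to pass to the uniformizer $t=1/x$ at infinity: writing $g(t)=f(1/t)$ one has $\operatorname{ord}_{t=0}g=\ordi f$ and $f'(1/t)=-t^{2}g'(t)$, so that $\ordi f'=2+\operatorname{ord}_{t=0}g'$; the same characteristic condition is what guarantees $\operatorname{ord}_{t=0}g'=(\ordi f)-1$ rather than something larger.
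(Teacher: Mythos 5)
Your method is essentially the paper's own: the paper likewise isolates leading terms, applies the quotient rule, and reads off that the numerator of $f'$ has leading coefficient $a\cdot(-\ordi f)$ over a denominator of degree $2n_2$; your expansion $\sum_{i,j}(i-j)a_ib_jx^{i+j-1}$ is just a more explicit version of the same computation, and your remark that $\ordi$ can be read off an unreduced ratio is also used implicitly there. The difference is that you noticed the step the paper glosses over: the leading coefficient is the integer $n-m=-\ordi f$ (times the nonzero constant $a_nb_m$) viewed \emph{inside} $K$, and its nonvanishing is not automatic. The paper's proof simply writes the numerator of $f'$ as $a(-\ordi f)x^{n_1+n_2-1}+R_1(x)$ with $\deg R_1<n_1+n_2-1$ and proceeds as if $a(-\ordi f)\neq 0$, which fails exactly when $\operatorname{char}(K)$ divides $\ordi f$. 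Your counterexample is valid: over $\mathbb{F}_2$, $f=x^2+x$ has $\ordi f=-2\neq 0$ but $f'=1$, so $-\ordi f'=0\neq 1=(-\ordi f)-1$. Hence Theorem~\ref{th5} as stated is false in positive characteristic, and your added hypothesis $\operatorname{char}(K)\nmid \ordi f$ (automatic in characteristic zero) is precisely what repairs both your argument and the paper's; under it, your degree computation is complete, and it also guarantees $f'\neq 0$ so that $\ordi f'$ is defined. So the verdict is: same approach as the paper, correct as a proof of the corrected statement, and the gap you flagged is real --- it lies in the paper's theorem and proof rather than in your write-up. (This is of a piece with the paper's loose handling of characteristic elsewhere, e.g.\ the unjustified claim in the proof of Lemma~\ref{lem3} that $\deg f\neq 0$ forces $\operatorname{char}(K)=0$ or $\gcd(\deg f,\operatorname{char}(K))=1$.) Your alternative route via the uniformizer $t=1/x$ is a clean equivalent formulation, subject to the same characteristic condition.
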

\begin{proof}
For $f\in K(x)$ with $\ordi f\neq 0$, we may write $-\ordi f=n_1-n_2$ for some $n_1,n_2\in\mathbb{Z}$ and $$f(x)=\frac{ax^{n_1}+r_1(x)}{x^{n_2}+r_2(x)}$$ where $a\neq 0$ and  $r_1,r_2\in K[x]$ with $\deg r_1<n_1$ and $ \deg r_2<n_2$.  The derivative of $f$ can be obtained by simplifying the product rule $$\frac{\big(an_1x^{n_1-1}+r_1^{\prime}(x)\big)\big(x^{n_2}+r_2(x)\big)-\big(ax^{n_1}+r_1(x)\big)\big(n_2x^{n_2-1}+r_2^{\prime}(x)\big)}{\big(x^{n_2}+r_2(x)\big)^2}$$ which we may write concisely as $$\frac{a(-\ordi f) x^{n_1+n_2-1}+R_1(x)}{x^{2n_2}+R_2(x)}$$ for some $R_1, R_2\in K[x]$ where $\deg R_1<n_1+n_2-1$ and $\deg R_2<2n_2$.  Then $-\ordi f^{\prime}=(n_1+n_2-1)-(2n_2)=n_1-n_2-1=(-\ordi f)-1$.
\end{proof}

Finally, the refinement of Ayad's result~\cite{Ayad} in Theorem~\ref{thm:valency_C} is as follows. 
\begin{theorem}\label{th3}
Let $f\in K(x)$, and let $d$ be the greatest proper divisor of $\deg f$.  If $\ordi f\neq 0$ is divisible by a prime $p>d$, then $f$ is prime over $\overline{K}$.
\end{theorem}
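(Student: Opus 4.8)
The plan is to argue by contradiction. Suppose $f$ is composite over $\overline{K}$, so that $f=g\circ h$ for some $g,h\in\overline{K}(x)$ with $\deg g,\deg h\geq 2$. By Theorem~\ref{thm:degree} we have $\deg f=(\deg g)(\deg h)$, and since both factors have degree at least $2$, each of $\deg g$ and $\deg h$ is a proper divisor of $\deg f$. Hence $\deg g\leq d$ and $\deg h\leq d$. Combined with the bounds $|\ordi g|\leq\deg g$ and $|\ordi h|\leq\deg h$ noted in the text, this gives $|\ordi g|\leq d<p$ and $|\ordi h|\leq d<p$.

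The next step is to put the decomposition into a form where Lemma~\ref{lem:ordi} applies, that is, to arrange $\ordi h<0$. If $\ordi h\geq 0$, I would invoke the construction described after Lemma~\ref{lem:ordi}: writing $h=h_1/h_2$ and dividing, one obtains a unit $\mu(x)=1/(x-a)$ with $a\in\overline{K}$ such that $H=\mu\circ h$ satisfies $\ordi H<0$. Setting $G=g\circ\mu^{-1}$ then produces a new decomposition $f=G\circ H$. Since $\mu$ has degree $1$, Theorem~\ref{thm:degree} gives $\deg G=\deg g$ and $\deg H=\deg h$, so the new composants still have degree at least $2$ and still satisfy the bounds from the previous paragraph. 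Thus without loss of generality I may assume $\ordi h<0$ from the outset.

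With $\ordi h<0$, Lemma~\ref{lem:ordi} yields $-\ordi f=\ordi g\cdot\ordi h$. By hypothesis $p\mid\ordi f$, hence $p\mid\ordi g\cdot\ordi h$, and since $p$ is prime it divides one of the two factors. If $p\mid\ordi h$, then because $\ordi h\neq 0$ we would have $|\ordi h|\geq p>d$, contradicting $|\ordi h|\leq d$. If instead $p\mid\ordi g$, then either $\ordi g=0$, forcing $-\ordi f=0$ and contradicting $\ordi f\neq 0$, or else $|\ordi g|\geq p>d$, again contradicting $|\ordi g|\leq d$. In every case we reach a contradiction, so no such decomposition exists and $f$ is prime over $\overline{K}$.

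I expect the only delicate point to be the reduction to the case $\ordi h<0$: one must check that the modification by the unit $\mu$ genuinely produces a composant of negative order at infinity while preserving the degrees of both factors, so that the degree bounds carry over unchanged. Everything after that is a short divisibility argument driven by the inequalities $|\ordi g|\leq\deg g\leq d<p$ and $|\ordi h|\leq\deg h\leq d<p$, together with the multiplicativity supplied by Lemma~\ref{lem:ordi}.
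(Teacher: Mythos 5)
Your proof is correct and follows essentially the same route as the paper's: normalize the decomposition so that $\ordi h<0$, apply Lemma~\ref{lem:ordi}, and derive a contradiction from $p\mid\ordi g\cdot\ordi h$ together with the bounds $\vert\ordi g\vert\leq\deg g\leq d<p$ and $\vert\ordi h\vert\leq\deg h\leq d<p$. You are in fact slightly more careful than the paper, which absorbs the normalization step into the phrase ``there exist $g,h$ with $\ordi h<0$'' and does not explicitly treat the case $\ordi g=0$; your handling of both points is a welcome addition, not a deviation.
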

\begin{proof}
Suppose that $f$ is composite.  Then there exist $g,h\in\overline{K}(x)$ with $\ordi h<0$ such that $f=g\circ h$.  By Lemma~\ref{lem:ordi}, either $\ordi h$ or $\ordi g$ is divisible by $p>d$, so that $\vert\ordi h\vert\geq p>d$ or $\vert\ordi g\vert\geq p>d$. However, each also satisfies $\vert\ordi h\vert\leq\deg h\leq d$ or $\vert\ordi g\vert\leq\deg g\leq d$, which is a contradiction in either case.
\end{proof}
This result does indeed extend Theorem \ref{thm:valency_C}. To see this, note that it is assumed that the polynomial $f$ admits a prime $p>d$ and some $a$ such that $v_f(a)=p$.  In such a case, $f-f(a)$ has a zero of order $v_f(a)$ at $a$, and defining the unit $\mu(x)=(ax+1)/x$ gives a rational function $\big(f-f(a)\big)\circ\mu$ with $-\ordi\big( \big(f-f(a)\big)\circ\mu\big)=v_f(a)=p$.  Such a rational function is prime by Theorem \ref{th3}, and so $f$ is prime as well since it can be obtained from the composition with a unit.


\section{Decompositions and the resultant}\label{sec:resultant}
In tis section, we first recall the resultant of two polynomials and the discriminant of a polynomial in $K[x]$.  Then, after we present a theorem relating the discriminant to the resultant of composite functions for the polynomial case, we generalize the notion to two rational functions of $K(x)$ and prove a similar result.  

The {\it resultant} of the two polynomials $f,g\in K[x]$ given by $f(x)=a_nx^n+\cdots+a_0$ and $g(x)=b_mx^m+\cdots+b_0$ is defined as $$\text{Res}_x( f,g)=a_n^m b_m^n\prod_{\alpha,\beta}(\alpha-\beta)$$ where $\alpha, \beta\in\overline{K}$ run over all of the roots of $f$ and $g$, respectively.  Three well-known properties of the resultant are as follows.  Given $f,g,h\in K[x]$, we have
\begin{enumerate}
\item $\text{Res}_x(f,g)=(-1)^{\deg f\deg g}\text{Res}_x(g,f)$,
\item $\text{Res}_x(f,gh)=\text{Res}_x(f,g)\text{Res}_x(f,h)$, and 
\item $\text{Res}_x(f,g)=0$ if and only $f$ and $g$ have a common root in $\overline{K}$.
\end{enumerate}
These properties all follow directly from the definition.  The factorization of one of the polynomials as $g(x)=b_m\displaystyle\prod_{\beta}(x-\beta)$ also shows that 
\begin{enumerate}
\setcounter{enumi}{3}
\item $\text{Res}_x(f,g)=a_n^m\displaystyle\prod_{\alpha}g(\alpha)$.
\end{enumerate}
The above property in turn shows that for a constant $c\in\overline{K}$ we have 
\begin{enumerate}
\setcounter{enumi}{4}
\item $\text{Res}_x(f,cg)=c^n\text{Res}_x(f,g)$.
\end{enumerate}
The {\it discriminant} of $f\in K[x]$ of degree $n$ and leading coefficient $a_n$ is given by $$D[f]=\frac{(-1)^{n(n-1)/2}}{a_n}\text{Res}_X(f,f^{\prime}).$$  Letting $t$ be a new variable, as in Ayad~\cite{Ayad}, $b\in\overline{K}$ is a {\it critical value} of $f$ if and only if it is a root of $D[f-t]$.  The multiplicity of a critical value is defined as its multiplicity as a root of $D[f-t]$, and we call a critical value with multiplicity 1 a {\it simple critical value}.

The following theorem, analogous to Ayad~\cite[Lemma 2]{Ayad}, demonstrates how $D[f-t]$ can be factored.
\begin{theorem}\label{th1}
Let $f=g\circ h$ for $f,g,h\in K[x]$ and let $D[f-t]$ be the discriminant of $f-t$.  There exists $a\in K$ such that $$D[f-t]=aD[g-t]^{\deg h}\text{Res}_x\big(f(x)-t,h^{\prime}(x)\big).$$
\end{theorem}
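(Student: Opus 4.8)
The plan is to reduce everything to the chain rule together with the listed properties (1)--(5) of the resultant. Write $n=\deg f$, $m=\deg g$, $\ell=\deg h$, and let $a_n,b_m,c$ be the leading coefficients of $f,g,h$; by Theorem~\ref{thm:degree} we have $n=m\ell$. Since $t$ is constant in $x$ we have $(f-t)'=f'$, and the chain rule gives $f'(x)=g'(h(x))\,h'(x)$. Substituting this into the definition $D[f-t]=\frac{(-1)^{n(n-1)/2}}{a_n}\text{Res}_x(f-t,f')$ and applying the multiplicativity property~(2) yields
\[
\text{Res}_x\big(f(x)-t,\,f'(x)\big)=\text{Res}_x\big(f(x)-t,\,g'(h(x))\big)\cdot\text{Res}_x\big(f(x)-t,\,h'(x)\big).
\]
This already isolates the factor $\text{Res}_x(f(x)-t,h'(x))$ appearing in the statement, so the whole problem is to show that the first resultant equals $D[g-t]^{\ell}$ up to a multiplicative constant lying in $K$.

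To this end I would factor $g'$ over $\overline K$ as $g'(y)=b\prod_k(y-\beta_k)$, where $b$ is the leading coefficient of $g'$ and the $\beta_k\in\overline K$ are its roots listed with multiplicity. Then $g'(h(x))=b\prod_k\big(h(x)-\beta_k\big)$, and properties~(5) and~(2) give $\text{Res}_x(f-t,g'\circ h)=b^{\,n}\prod_k\text{Res}_x(f-t,\,h-\beta_k)$. Each factor is computed by swapping arguments via property~(1) and expanding by property~(4): the $\ell$ roots $x_{k,j}$ of $h-\beta_k$ satisfy $f(x_{k,j})=g(h(x_{k,j}))=g(\beta_k)$, so that
\[
\text{Res}_x\big(f(x)-t,\,h(x)-\beta_k\big)=(-1)^{n\ell}c^{\,n}\big(g(\beta_k)-t\big)^{\ell}.
\]
Multiplying over all $k$ turns the first resultant into a constant times $\big(\prod_k(g(\beta_k)-t)\big)^{\ell}$.

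It then remains to recognize $\prod_k\big(g(\beta_k)-t\big)$ as $D[g-t]$ up to a constant. Applying property~(4) to $\text{Res}_y(g',g-t)$ gives $\text{Res}_y(g',g-t)=b^{\,m}\prod_k\big(g(\beta_k)-t\big)$, and property~(1) together with the defining formula $D[g-t]=\frac{(-1)^{m(m-1)/2}}{b_m}\text{Res}_y(g-t,g')$ (using $(g-t)'=g'$) rewrites this resultant as a $K$-scalar multiple of $D[g-t]$. Feeding this back through the previous two paragraphs expresses $\text{Res}_x(f-t,g'\circ h)$ as a constant times $D[g-t]^{\ell}=D[g-t]^{\deg h}$; combining with the splitting from the first paragraph and the definition of $D[f-t]$ produces the claimed identity.

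The step demanding the most care---and the one I expect to be the main obstacle---is confirming that the accumulated constant $a$ genuinely lies in $K$ rather than merely in $\overline K$. This holds because the roots $\beta_k$ enter only through the symmetric quantity $\prod_k(g(\beta_k)-t)$ and through resultants, all of which lie in $K[t]$; the remaining scalars are the leading coefficients $a_n,b_m,b,c\in K$ and various signs, so $a\in K$. Two bookkeeping points must also be respected: the exponents coming from property~(4) should be taken with respect to the \emph{actual} degree of $g'$, so that the computation is uniform across all characteristics for which $g'\neq 0$ (recall that $\gcd(\deg g,\text{char}\,K)$ need not equal $1$); and in the degenerate case $g'=0$ both $D[f-t]$ and $D[g-t]$ vanish, so the identity holds trivially for any $a$.
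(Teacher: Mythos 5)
Your proposal is correct and follows essentially the same route as the paper's proof: chain rule plus multiplicativity of the resultant to split off $\text{Res}_x(f-t,h^{\prime})$, factoring $g^{\prime}$ over $\overline{K}$ so that $\text{Res}_x(f-t,g^{\prime}\circ h)$ can be evaluated at the roots of $h-\beta$, and then recognizing $\prod_{\beta}\big(g(\beta)-t\big)$ as a $K$-scalar multiple of $D[g-t]$. The one real difference is a point in your favor: you work with the actual leading coefficient and degree of $g^{\prime}$ (and dispose of the degenerate case $g^{\prime}=0$ separately), whereas the paper writes $g^{\prime}(x)=(\deg g)\,b\prod_{\beta}(x-\beta)$ with $b$ the leading coefficient of $g$, which silently assumes $\mathrm{char}(K)\nmid\deg g$.
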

\begin{proof}
Let $n$ and $a_n$ be the degree and leading coefficient of $f$ respectively.  We have 
\begin{align*}
D[f-t]&=\frac{(-1)^{n(n-1)/2}}{a_n}\text{Res}_x\big(f(x)-t,f^{\prime}(x)\big)\\
&=\frac{(-1)^{n(n-1)/2}}{a_n}\text{Res}_x\Big(g\big(h(x)\big)-t,g^{\prime}\big(h(x)\big)\Big)\text{Res}_x\Big(g\big(h(x)\big)-t,h^{\prime}(x)\Big)\\
\end{align*}
Write $g^{\prime}(x)=\displaystyle (\deg g)b\prod_{\beta}(x-\beta)$ where $\beta\in \overline{K}$ runs over all roots of $g^{\prime}$, and $b\in K$ is the leading coefficient of $g$.  Then the properties (5), (1), and (2) of the resultant can be applied here to yield
\begin{align*}
\text{Res}_x\Big(g\big(h(x)\big)-t,g^{\prime}\big(h(x)\big)\Big)&=\text{Res}_x\Big(g\big(h(x)\big)-t,b(\deg g)\prod_{\beta}\big(h(x)-\beta\big)\Big)\\
&=(b(-1)^{\deg g^{\prime}\deg h}\deg g)^{\deg f}\prod_{\beta}\text{Res}_x\Big(h(x)-\beta,g\big(h(x)\big)-t\Big).
\end{align*}
Letting $c$ denote the leading coefficient of $h(x)$, we have
\begin{align*}
\text{Res}_x\Big(g\big(h(x)\big)-t,g^{\prime}\big(h(x)\big)\Big)&=(b(-1)^{\deg g^{\prime}\deg h}\deg g)^{\deg f}\prod_{\beta}c^{\deg f}\big(g(\beta)-t\big)^{\deg h}\\
&=(b(-1)^{\deg g^{\prime}\deg h}c^{\deg g^{\prime}}\deg g)^{\deg f}\prod_{\beta}\big(g(\beta)-t\big)^{\deg h}.
\end{align*}
From the definition of the discriminant of $g-t$ and the resultant properties (1) and (4), we have 
\begin{align*}
D[g-t]^{\deg h}&=\left[\frac{(-1)^{\deg g((\deg g)-1)/2}}{b}\right]^{\deg h}\Big(\text{Res}_x\big(g(x)-t,g^{\prime}(x)\big)\Big)^{\deg h}\\
&=\left[\frac{(-1)^{\deg g((\deg g)-1)/2}}{b}\right]^{\deg h}(b(-1)^{\deg g^{\prime}}\deg g)^{\deg f}\prod_{\beta}\big(g(\beta)-t\big)^{\deg h}\\
&=(-1)^{\deg f(\deg g-1)/2+\deg f\deg g^{\prime}}b^{\deg f-\deg h}(\deg g)^{\deg f}\prod_{\beta}\big(g(\beta)-t\big)^{\deg h},
\end{align*}
so that 
\begin{align*}
\text{Res}_x\Big(g\big(h(x)\big)-t,g^{\prime}\big(h(x)\big)\Big)&=\frac{(b(-1)^{\deg g^{\prime}\deg h}c^{\deg g^{\prime}}\deg g)^{\deg f}D[g-t]^{\deg h}}{(-1)^{\deg f(\deg g-1)/2+\deg f\deg g^{\prime}}b^{\deg f-\deg h}(\deg g)^{\deg f}}\\
&=(-1)^{n\deg g^{\prime}(\deg h-1)-n(\deg g-1)/2}b^{\deg h}c^{n\deg g^{\prime}}D[g-t]^{\deg h}.
\end{align*}
  Now, setting $a=\displaystyle\frac{(-1)^{n(n-1)/2-n(\deg g-1)/2+n\deg g^{\prime}(\deg h-1)}b^{\deg h}c^{n\deg g^{\prime}}}{a_n}$ yields the desired expression $$D[f-t]=aD[g-t]^{\deg h}\text{Res}_x\Big(g\big(h(x)\big)-t,h^{\prime}(x)\Big).$$
\end{proof}
The next corollary follows directly from Theorem \ref{th1}. We include the proof since Ayad~\cite[Corollary 2]{Ayad} does not.
\begin{corollary}\label{cor:disc}
Let $f\in K[x]$ and let $D(t)$ be the discriminant of $f-t$.  If $f(x)$ is composite with a right composition factor of degree $k$, then there exist polynomials $A,B\in K[t]$ such that $D(t)=A(t)^kB(t)$ and $\deg B\leq k-1$.  Moreover, if $\deg f\neq 0$, then $\deg B=k-1$.
\end{corollary}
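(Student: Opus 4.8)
The plan is to read the factorization straight off Theorem~\ref{th1}. Since $f$ is composite with a right composition factor of degree $k$, write $f=g\circ h$ with $g,h\in K[x]$ and $\deg h=k$. Theorem~\ref{th1} then yields a constant $a\in K$ with
$$D[f-t]=a\,D[g-t]^{k}\,\text{Res}_x\big(f(x)-t,h^{\prime}(x)\big),$$
and I would simply set $A(t)=D[g-t]$ and $B(t)=a\,\text{Res}_x\big(f(x)-t,h^{\prime}(x)\big)$, so that $D(t)=A(t)^{k}B(t)$. Both are genuinely polynomials in $t$ over $K$: the discriminant of $g-t\in K[t][x]$ lies in $K[t]$, and the resultant of the two polynomials $f(x)-t$ and $h^{\prime}(x)$, each with coefficients in $K[t]$, again lies in $K[t]$.

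The real content is the bound on $\deg B$, that is, on the degree in $t$ of $\text{Res}_x\big(f(x)-t,h^{\prime}(x)\big)$. I would compute this with resultant properties (1) and (4): writing $\ell$ for the leading coefficient of $h^{\prime}$ and swapping the two arguments gives
$$\text{Res}_x\big(f(x)-t,h^{\prime}(x)\big)=(-1)^{(\deg f)(\deg h^{\prime})}\ell^{\deg f}\prod_{\beta}\big(f(\beta)-t\big),$$
where $\beta$ runs over the roots of $h^{\prime}$ in $\overline{K}$ counted with multiplicity. There are exactly $\deg h^{\prime}$ such factors, each linear in $t$, so this product has degree exactly $\deg h^{\prime}$ in $t$. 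Since $\deg h^{\prime}\leq(\deg h)-1=k-1$, we obtain $\deg B\leq k-1$ at once.

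For the final assertion, I would invoke the derivative-degree identity already used in the proof of Lemma~\ref{lem3}: when $\deg f\neq 0$ we have $\deg h\neq 0$ as well (as $\deg f=(\deg g)(\deg h)$), so $\deg h^{\prime}=(\deg h)-1=k-1$ and the product above has degree exactly $k-1$ in $t$; since $a\neq 0$ (it is the explicit nonzero constant manufactured in Theorem~\ref{th1}, a ratio of nonzero leading coefficients up to sign), no cancellation collapses the leading term and $\deg B=k-1$. The main obstacle is precisely this degree computation for the resultant---counting the roots of $h^{\prime}$ correctly and tracking the linearity in $t$---together with the characteristic bookkeeping that secures $\deg h^{\prime}=k-1$ in the equality case; everything else is inherited directly from Theorem~\ref{th1}.
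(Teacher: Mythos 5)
Your proposal is correct and follows essentially the same route as the paper's own proof: apply Theorem~\ref{th1}, set $A(t)=D[g-t]$ and $B(t)=a\,\mathrm{Res}_x\big(g(h(x))-t,h^{\prime}(x)\big)$, and then deduce $\deg B=\deg h^{\prime}=k-1$ when $\deg f\neq 0$ via the divisibility of $\deg h$ into $\deg f$. Your explicit computation of $\deg_t\mathrm{Res}_x\big(f(x)-t,h^{\prime}(x)\big)=\deg h^{\prime}$ using resultant properties (1) and (4) merely fills in a step the paper leaves implicit.
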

\begin{proof}
Let $f,g,h\in K[x]$ such that  $f=g\circ h$ with $\deg h=k$.  By applying Theorem \ref{th1} and setting $B(t)=a\text{Res}_x\Big(g\big(h(x)\big)-t,h^{\prime}(x)\Big)$ and $A(t)=D[g-t]$, we have that $D(t)=A(t)^kB(t)$ with $A,B\in K[t]$.

Further, assuming that $\deg f$ is non-zero, then its divisor $k=\deg h\leq \deg f$ is non-zero as well.  This implies that $\deg B=\deg h^{\prime}=(\deg h)-1=k-1$.
\end{proof}

\begin{theorem}\label{th6}
Let $f\in K[x]$, and let $d$ be the greatest proper divisor of $\deg f$.  If $f$ has at least $d$ simple critical values, then $f$ is prime over $K$.
\end{theorem}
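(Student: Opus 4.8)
The plan is to argue by contradiction using Corollary~\ref{cor:disc}. Suppose $f$ is composite, so that $f=g\circ h$ for some $g,h\in K[x]$ with $\deg g,\deg h\geq 2$. Set $k=\deg h$. Since $\deg f=(\deg g)(\deg h)$ with $\deg g\geq 2$, the integer $k$ divides $\deg f$ and satisfies $k<\deg f$; hence $k$ is a proper divisor of $\deg f$, and therefore $k\leq d$.

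Next I would extract the structure of the discriminant. Writing $D(t)=D[f-t]$, Corollary~\ref{cor:disc} provides a factorization $D(t)=A(t)^{k}B(t)$ with $A,B\in K[t]$ and $\deg B\leq k-1$. The crucial observation is how simple critical values interact with this factorization: a simple critical value is by definition a root of $D(t)$ of multiplicity exactly $1$. Since $k\geq 2$, every root of $A(t)$ occurs in $D(t)=A(t)^{k}B(t)$ with multiplicity at least $k\geq 2$. Consequently no simple critical value can be a root of $A(t)$; each must instead be a root of $B(t)$, and in fact a simple one.

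Counting then finishes the argument. The number of distinct roots of $B(t)$ is at most $\deg B\leq k-1$, so $f$ has at most $k-1$ simple critical values. Combining this with $k\leq d$ shows that $f$ has at most $d-1$ simple critical values, which contradicts the hypothesis that $f$ has at least $d$ of them. Therefore $f$ cannot be composite, and so $f$ is prime over $K$.

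I expect the only delicate point to be the justification that simple critical values are confined to the factor $B(t)$ rather than to $A(t)^{k}$; once the multiplicity bookkeeping in $D(t)=A(t)^{k}B(t)$ is made explicit this is immediate, and notably the sharp value of $\deg B$ from Corollary~\ref{cor:disc} is not needed, since the bound $\deg B\leq k-1$ already suffices for the degree count.
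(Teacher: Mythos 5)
Your proof is correct and takes essentially the same route as the paper's: assume $f=g\circ h$ is composite, set $k=\deg h\leq d$, invoke Corollary~\ref{cor:disc} to write $D(t)=A(t)^k B(t)$ with $\deg B\leq k-1$, observe that every simple critical value must be a root of $B(t)$, and derive the contradiction $d\leq \deg B\leq k-1<k\leq d$. Your explicit multiplicity argument for why simple critical values cannot be roots of $A(t)$ is exactly the justification the paper leaves implicit in the phrase ``each simple critical value is necessarily a root of $B(t)$.''
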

\begin{proof}
Suppose that $f$ is composite with a right composition factor of degree $k$.  Then $2\leq k\leq d$, and we write the discriminant of $f-t$ as $D(t)=A(t)^kB(t)$ as in Corollary~\ref{cor:disc}.  Since $f$ has at least $d$ simple critical values, and each simple critical value is necessarily a root of $B(t)$, we have $k\leq d\leq \deg B\leq k-1$, which is a contradiction.  Thus, $f$ is prime.
\end{proof}

Kihel and Larone \cite{KiLa} studied the resultant of the two rational functions $f,g\in\mathbb{C}(x)$, which they defined as the resultant of their numerators.  This idea translates well to elements of $K(x)$, so we take the same definition in $K(x)$.  The following example illustrates that the result is the expected one that coincides with the polynomial case.
\begin{example}
Let $$f(x)=\frac{(x+1)^4}{x^3}\qquad\text{so}\qquad f^{\prime}(x)=\frac{(x+1)^3(x-3)}{x^4}.$$  The list of all critical points of $f$ are $(-1,0),(-1,0),(-1,0),(3,256/27)$.  Note that $$\text{Res}_x\big(f(x)-t, f^{\prime}(x)\big)=t^3(256-27T)$$ has $0$ and $256/27$ as its roots with multiplicities 3 and 1 respectively.
\end{example}
Kihel and Larone's proofs of Lemma 3.2 and Corollary 3.3 in \cite{KiLa} deal with only $\mathbb{C}$ but immediately translate to $\overline{K}$ under the assumptions made here, yielding the following result analogous to Theorem \ref{th1}.
\begin{theorem}
Let $f\in K(x)$ be composite over $\overline{K}$ with $f=g\circ h$.  There exist an integer $\ell\geq 0$ and polynomials $A,B\in\overline{K}[t]$ such that $$\text{Res}_x(f(x)-t,f^{\prime}(x))=t^{\ell}A(t)^{\deg h}B(t)$$ where $\deg B\geq 2(\deg h)-1$.  Moreover, if $\gcd(\deg f,\ordi f)=1$ and $\ordi f>d$ where $d$ is the greatest proper divisor of $\deg f$, then $\ell>0$.
\end{theorem}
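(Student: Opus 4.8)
For the factorization itself I would follow verbatim the arguments of Kihel and Larone (their Lemma 3.2 and Corollary 3.3): these are stated over $\mathbb{C}$ but use only the multiplicativity of the resultant and the chain rule $f'=(g'\circ h)\cdot h'$, so they carry over word for word to $\overline{K}$. They produce $A(t)$ as the resultant recording the finite critical values of $g$, $B(t)$ as the resultant against $\text{num}(h')$ recording the critical points of $h$, and a residual power $t^{\ell}$. The substance of the statement is therefore the claim $\ell>0$ under the two arithmetic hypotheses, and that is what I would concentrate on.

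The plan is to realize $\ell$ as an order of vanishing at $t=0$ coming from a zero of $f$ seen neither by $A$ nor by $B$, and then to force such a zero. Writing $f=f_1/f_2$ in lowest terms, properties (1) and (4) of the resultant give $\text{Res}_x\big(f(x)-t,f'(x)\big)=c\prod_{\sigma}\big(f_1(\sigma)-t\,f_2(\sigma)\big)$, where $\sigma$ runs over the roots of $\text{num}(f')$ and $c\in\overline{K}$ is independent of $t$. Since $\gcd(f_1,f_2)=1$, we have $f_2(\sigma)\ne 0$ whenever $f_1(\sigma)=0$, so each finite zero of $f$ that is also a critical point contributes a factor of $t$. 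Thus it suffices to exhibit a finite zero of $f$ of multiplicity at least $2$ lying over the value $0$ in a manner invisible to the finite critical values of $g$ and to the critical points of $h$.

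To build such a zero I would first normalize the decomposition. Since $\ordi f>d\ge 1$ we have $\ordi f>0$, so $f(\infty)=0$; by the construction following Lemma~\ref{lem:ordi} I may replace $(g,h)$ by $(g\circ\mu^{-1},\mu\circ h)$ for a unit $\mu$ and so assume $\ordi h<0$ without changing $\deg h$. Then $h(\infty)=\infty$ and $g(\infty)=f(\infty)=0$, so $\ordi g>0$, and Lemma~\ref{lem:ordi} gives $\ordi f=\ordi g\cdot(-\ordi h)$ with both factors positive. As $-\ordi h$ divides $\ordi f$ and $\deg h$ divides $\deg f$, the hypothesis $\gcd(\deg f,\ordi f)=1$ forces $\gcd(-\ordi h,\deg h)=1$, whence $-\ordi h<\deg h$ (using $\deg h\ge 2$) and $h$ has a finite pole; moreover if $\ordi g=1$ then $\ordi f=-\ordi h\le\deg h\le d$, contradicting $\ordi f>d$, so $\ordi g\ge 2$. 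Taking any finite pole $x_0$ of $h$, of order $e_0\ge 1$, multiplicativity of orders of vanishing under composition (valid in every characteristic, since it only substitutes one local expansion into another) shows that $f=g\circ h$ vanishes at $x_0$ to order $(\ordi g)\,e_0\ge 2$. Hence $x_0$ is a repeated root of $f_1$, $\text{Res}_x(f_1,\text{num}(f'))=0$, and the resultant is divisible by $t$.

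The step I expect to be the main obstacle is confirming that this divisibility is recorded by the factor $t^{\ell}$ and not absorbed into $A(t)^{\deg h}$ or $B(t)$. This is exactly where the normalization $\ordi h<0$ pays off: the zero $x_0$ sits over the value $0$ that $g$ attains at its point at infinity, which is not a finite critical point of $g$ and so does not register in $A(t)=\text{Res}_x\big(g(x)-t,g'(x)\big)$; and $x_0$ arises from a pole of $h$ rather than from a root of $\text{num}(h')$, so it is likewise separated from $B(t)$. Making this rigorous means tracing, in the Kihel--Larone derivation, the denominator factors by which $\text{num}(f')$ differs from $\text{num}(g'\circ h)\cdot\text{num}(h')$; these are supported at the poles of $h$ and at infinity, and it is precisely their pairing against $f_1-t\,f_2$ at the zeros of $f$ that yields $t^{\ell}$. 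A minor additional point is that in positive characteristic the order of vanishing of $f'$ at $x_0$ may exceed $(\ordi g)e_0-1$, but this can only enlarge $\ell$, so the conclusion $\ell>0$ is unaffected.
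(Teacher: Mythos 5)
Your proposal is sound, but note that the paper offers no proof of this theorem to compare against: its entire justification is the one-sentence remark that Kihel and Larone's proofs of Lemma 3.2 and Corollary 3.3 in \cite{KiLa} translate from $\mathbb{C}$ to $\overline{K}$. You match the paper in deferring the factorization itself to \cite{KiLa}, but you supply a genuine argument for the clause $\ell>0$ where the paper supplies none, and that argument is correct: the normalization to $\ordi h<0$ via the unit construction following Lemma~\ref{lem:ordi} preserves $\deg h$; Lemma~\ref{lem:ordi} then gives $\ordi f=\ordi g\cdot(-\ordi h)$ with both factors positive; the hypothesis $\gcd(\deg f,\ordi f)=1$ descends to $\gcd(-\ordi h,\deg h)=1$, which rules out $-\ordi h=\deg h$ and so produces a finite pole $x_0$ of $h$, say of order $e_0$; the hypothesis $\ordi f>d$ rules out $\ordi g=1$; and your order computation at $x_0$, which is indeed characteristic-free, shows that the numerator $f_1$ of $f$ vanishes at $x_0$ to order $\ordi g\cdot e_0\geq 2$ while $f_2(x_0)\neq 0$, so $x_0$ is a common root of $f_1$ and the numerator of $f^{\prime}$, and the corresponding factor $f_1(x_0)-tf_2(x_0)=-tf_2(x_0)$ in the product formula makes $t$ divide $\text{Res}_x\big(f(x)-t,f^{\prime}(x)\big)$.

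The step you single out as the main obstacle is, however, not an obstacle, because the theorem as stated is purely existential in $\ell$, $A$, and $B$. Granting the factorization $\text{Res}_x\big(f(x)-t,f^{\prime}(x)\big)=t^{\ell_0}A_0(t)^{\deg h}B_0(t)$ from the first part, if $\ell_0=0$ then $t$, being irreducible in $\overline{K}[t]$, divides $A_0$ or $B_0$; writing $A_0=tA_1$ yields the valid factorization $t^{\deg h}A_1(t)^{\deg h}B_0(t)$ with exponent $\deg h\geq 2$, and writing $B_0=tB_1$ yields $t\,A_0(t)^{\deg h}B_1(t)$ with exponent $1$. (One caveat: the theorem prints $\deg B\geq 2(\deg h)-1$, but the proof of the corollary that follows it uses $\deg B\leq 2\deg h-1$, so the printed inequality is evidently a typo for $\leq$; under that intended reading both refactorings preserve the degree bound, since $\deg B_1<\deg B_0$.) This shortcut is worth taking, because the bookkeeping you propose instead is not as clean as your last paragraph suggests in positive characteristic: a finite pole of $h$ can simultaneously be a root of the numerator of $h^{\prime}$ there (for $p$ odd, $h=x^2+x^{-p}$ has $h^{\prime}=2x$, which vanishes at the pole $x=0$), so the contribution of $x_0$ need not be separated from $B(t)$ in the canonical construction, whereas the refactoring argument is immune to this.
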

The polynomial $\text{Res}_x(f(x)-t,f^{\prime}(x))$ acts as the analog of the discriminant for rational functions $f\in K(x)$ in the sense that $b\in\overline{K}$ is a critical value of $f$ if and only if $b$ is a root of $\text{Res}_x(f(x)-t,f^{\prime}(x))$.  Extending the nomenclature, we call a critical value with multiplicity 1 as a root of $\text{Res}_x(f(x)-t,f^{\prime}(x))$ a {\it simple critical value} of $f$.

\begin{corollary}
Let $f\in K(x)$, and let $d$ be the greatest proper divisor of $\deg f$.  If $f$ has at least $2d$ non-zero simple critical values, then $f$ is prime over $\overline{K}$.
\end{corollary}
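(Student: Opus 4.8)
The plan is to argue by contraposition, in parallel with the proof of Theorem~\ref{th6}. Suppose $f$ is composite over $\overline{K}$, say $f=g\circ h$ with $g,h\in\overline{K}(x)$ of degree at least $2$, and put $k=\deg h$. Since $\deg f=(\deg g)(\deg h)$ and $\deg g\geq 2$, the number $k$ is a proper divisor of $\deg f$, so $2\leq k\leq d$. The factorization theorem for rational functions stated above lets us write $\text{Res}_x(f(x)-t,f^{\prime}(x))=t^{\ell}A(t)^{k}B(t)$ with $\ell\geq 0$ and $A,B\in\overline{K}[t]$, where, as in Theorem~\ref{th1}, the roots of $A$ are exactly the critical values of $g$.

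Next I would pin down where the non-zero simple critical values sit in this product. Each is, by definition, a multiplicity-$1$ root of $\text{Res}_x(f(x)-t,f^{\prime}(x))$. Being non-zero, it is not the root supplied by $t^{\ell}$, and it cannot be a root of $A$, since any such root has multiplicity at least $k\geq 2$ in $A(t)^{k}$. Hence every non-zero simple critical value is a simple root of $B$ and, in particular, is not a critical value of $g$. Differentiating the composition gives $f^{\prime}=(g^{\prime}\circ h)\cdot h^{\prime}$, and a simple critical value $b$ has a single critical point $c$ of order two in its fibre, with $f^{\prime}(c)=0$; because $b$ is not a critical value of $g$ we have $g^{\prime}(h(c))\neq 0$, which forces $h^{\prime}(c)=0$. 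Thus each such value is of the form $g(h(c))$ for a critical point $c$ of $h$.

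The count then closes the argument exactly as in Theorem~\ref{th6}. Writing $h=h_1/h_2$, the critical points of $h$ in $\overline{K}$ are the roots of the numerator $h_1^{\prime}h_2-h_1h_2^{\prime}$ of $h^{\prime}$, a polynomial of degree at most $2k-2$, so $h$ has at most $2k-2$ critical points in $\overline{K}$. Distinct critical values have disjoint fibres, so $b\mapsto c$ is an injection from these non-zero simple critical values into that set. Allowing at most one further non-zero simple critical value to arise from ramification at infinity, $f$ has at most $(2k-2)+1=2k-1\leq 2d-1<2d$ non-zero simple critical values, contradicting the hypothesis that it has at least $2d$. Consequently no such decomposition exists and $f$ is prime over $\overline{K}$.

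I expect the main obstacle to be the degree bookkeeping in this last step, specifically the treatment of the point at infinity. When $\ordi f=0$ the leading coefficient of $f_1-tf_2$ is linear in $t$, so one value has $\infty$ in its fibre and may become a critical value through ramification of $h$ at infinity rather than through a finite zero of $h^{\prime}$; one must verify that this accounts for at most one extra value and that the injection $b\mapsto c$ still applies there. Confirming that critical values of $g$ never occur with multiplicity $1$, so that they are genuinely excluded from the simple roots of $B$, is the other point that has to be handled with care.
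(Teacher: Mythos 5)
Your proof is correct and reaches the same contradiction, but the decisive step is different from the paper's. The paper's proof is a single line: exactly as in Theorem~\ref{th6}, every non-zero simple critical value must be a (simple) root of $B$, and the degree bound from the cited factorization theorem gives the chain $2\deg h\leq 2d\leq \deg B\leq 2\deg h-1$, a contradiction. (Note the paper states that theorem with $\deg B\geq 2(\deg h)-1$; this must be a typo for $\leq$, since only the upper bound makes the chain work.) You use the same factorization $t^{\ell}A(t)^{k}B(t)$ and the same exclusion of $t^{\ell}$ and $A(t)^{k}$, but you never use the bound on $\deg B$; instead you recount the values geometrically: by the chain rule each such value is the image of a critical point of $h$, the numerator of $h'$ has degree at most $2k-2$, fibres of distinct values are disjoint, and one exceptional value is allowed, giving at most $2k-1<2d$. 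What this buys: your argument is immune to the inequality-direction typo, and it exposes why the bound on $\deg B$ is what it is. What it costs: you need the critical values of $g$ to lie among the roots of $A$ --- true of the Kihel--Larone construction and of Theorem~\ref{th1}, but not actually asserted in the paper's statement of the theorem you cite, so it must be invoked as a property of that construction rather than of the statement. Two smaller corrections: the exceptional value is not about ramification at infinity --- under the paper's definition, critical values are images of critical points in $\overline{K}$, so the case your chain-rule step genuinely misses is a critical point $c\in\overline{K}$ that is a pole of $h$, which forces $b=g(\infty)$ and hence contributes at most one value; and you do not need a simple critical value to have a unique critical point of valency two in its fibre (a delicate claim in positive characteristic) --- the mere existence of one critical point in the fibre, which is the definition of critical value, suffices to run your argument.
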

The proof is similar to that of Theorem \ref{th6} with instead the inequality $$2\deg h\leq 2d\leq \deg B\leq2\deg h-1$$ yielding the contradiction.

\end{document}